\title{Signed Magic arrays with certain property}
\author {
  Chanceley Book and  Abdollah Khodkar\\
Department of Computing and Mathematics\\
University of West Georgia\\
Carrollton, GA 30118\\
 {\tt cbook1@my.westga.edu},  {\tt akhodkar@westga.edu}
}
\date{}
\newtheorem{prelem}{{\bf Theorem}}
\newenvironment{lem}{\begin{prelem}{\hspace{-0.5em}{\bf}}}{\end{prelem}}
 \newtheorem{theorem}{Theorem}
\newtheorem{corollary}[theorem]{Corollary}
\newtheorem{lemma}[theorem]{Lemma}
\theoremstyle{definition}
\theoremstyle{remark}
\begin{document}

\maketitle

\begin{abstract}
\noindent A signed magic array, $SMA(m, n;s,t)$,  is an $m \times n$ array with the same number of filled cells $s$ in each row and the same number of filled cells $t$ in each column, filled with a certain set of numbers that is symmetric about the number zero, such that every row and column has a zero sum. We use the notation $SMA(m, n)$ if $m=t$ and $n=s$.

\noindent In this paper, we prove that for every even number $n\geq 2$ there exists an $SMA(m,n)$ such that
the entries $\pm x$ appear in the same row for every $x\in\{1, 2, 3,\ldots, mn/2\}$ if and only if
 $m\equiv 0, 3(\mod4)$ and $n=2$ or $m\geq 3$ and $n\geq 4$.

\noindent {Keywords: magic array, Heffter array, signed magic array, shiftable array}
\end{abstract}

\section{Introduction}\label{introduction}

An {\em integer Heffter array} $H(m, n; s, t)$ is an $m\times n$ array with entries from
$X=\{\pm1,\pm2,$ $\ldots,\pm ms\}$
such that each row contains $s$ filled cells and each column contains $t$ filled cells,
the elements in every row and column sum to 0 in ${\mathbb Z}$, and
for every $x\in X$, either $x$ or $-x$ appears in the array.
The notion of an integer Heffter array $H(m, n; s, t)$ was first defined by Archdeacon in \cite{arc1}.
Integer Heffter arrays with $m=n$ represent a type of magic square where each number from the set
$\{1,2,3,\ldots,n^2\}$ is used once up to sign. A Heffter array is {\em tight} if it has no empty cell; that is,
$n=s$ (and necessarily $m = t$). The proof of the following Theorem can be found in \cite{arc2}.

\begin{theorem}\label{tightHeffter}
Let $m, n$ be integers at least 3.
There is a tight integer Heffter array if and only if $mn\equiv 0, 3 \pmod 4$.
\end{theorem}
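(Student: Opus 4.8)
The plan is to prove the two directions separately, with the forward (necessity) direction following from a global sum count and the reverse (existence) direction requiring explicit constructions organized by the residues of $m$ and $n$ modulo $4$.

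For necessity, I would argue as follows. In a tight array every cell is filled, so the entries are a signed permutation of $1, 2, \ldots, mn$: for each $k$ exactly one of $k$ or $-k$ occurs. Since each row sums to $0$, the sum of all entries is $0$; writing $\epsilon_k \in \{\pm 1\}$ for the sign attached to magnitude $k$, this says $\sum_{k=1}^{mn}\epsilon_k k = 0$. Splitting into the positive part $P$ and the negative part $M$, we get $P=M$ and $P+M=\frac{mn(mn+1)}{2}$, so $\frac{mn(mn+1)}{4}$ is an integer, i.e. $4 \mid mn(mn+1)$. Since $mn$ and $mn+1$ are consecutive, exactly one is even, and the product is divisible by $4$ precisely when $mn \equiv 0$ or $3 \pmod 4$, which is the stated necessary condition.

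For sufficiency I would build the arrays directly, splitting into the three regimes compatible with $mn\equiv 0,3\pmod 4$: (i) $m$ and $n$ both even; (ii) exactly one of $m,n$ divisible by $4$ and the other odd; (iii) $m,n$ both odd with exactly one $\equiv 3 \pmod 4$. The engine is the notion of a \emph{shiftable} array: a completely filled array in which every row and every column contains equally many positive and negative entries and has zero sum. Such an array can exist only when both dimensions are even, but it has the crucial property that adding a fixed constant to every absolute value preserves all line sums; hence shiftable pieces can be stacked or concatenated on consecutive, disjoint magnitude ranges without disturbing the zero-sum conditions. I would first exhibit shiftable tight arrays for the smallest even dimensions (noting that width or height $2$ is impossible, since a balanced zero-sum line of length $2$ must repeat a magnitude, so the genuine base blocks start at $4$), dispose of regime (i) by assembling such blocks, and then reduce regimes (ii)--(iii) to a small library of explicit \emph{seed} arrays, one for each relevant residue pair in the unavoidable small sizes, which are grown in steps of $4$ along each even direction by appending shifted shiftable blocks, thereby covering all admissible $m,n\ge 3$.

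The hard part will be regime (iii), where both dimensions are odd. There the shiftable-block machinery fails in \emph{both} directions: a line of odd length cannot be sign-balanced, so no shiftable piece can be appended along either axis, and the clean ``shift and stack'' induction is unavailable. I would therefore treat the both-odd case by hand, constructing an explicit template for each admissible pair of residues modulo $4$ and enlarging it by even increments using purpose-built filler blocks that are \emph{not} shiftable but are instead fitted to the exact magnitude range they occupy, so that the global use of each value $1,\ldots,mn$ exactly once and the vanishing of every line sum are maintained simultaneously. Verifying these two bookkeeping conditions in parallel across all residue cases --- rather than producing any single array --- is where essentially all the difficulty lies; the even-dimensional cases become routine once the shiftable blocks are in hand.
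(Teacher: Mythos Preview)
The paper does not actually prove this theorem: it is quoted as background, and the sentence immediately preceding it reads ``The proof of the following Theorem can be found in \cite{arc2}.'' So there is no in-paper argument to compare your proposal against; the paper simply imports the result from Archdeacon, Boothby and Dinitz.

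That said, your proposal is a sensible outline and your necessity direction is a complete and correct proof: the sign-splitting argument forcing $4\mid mn(mn+1)$ is exactly the standard one. Your sufficiency direction, however, is a plan rather than a proof. The tripartite case split (both even; one $\equiv 0\pmod 4$ and one odd; both odd with product $\equiv 3\pmod 4$) is correct and exhaustive, and your identification of the both-odd regime as the crux is accurate --- that is indeed where the work in \cite{arc2} is concentrated. But phrases like ``exhibit shiftable tight arrays for the smallest even dimensions,'' ``a small library of explicit seed arrays,'' and ``purpose-built filler blocks that are not shiftable but are fitted to the exact magnitude range'' defer precisely the content that constitutes the theorem. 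In the odd--odd case no generic shift-and-stack mechanism is available, and the actual proof in \cite{arc2} requires intricate explicit families (not merely ``one template per residue pair enlarged by even increments''); producing and verifying those families is the theorem, not a routine afterthought. So your write-up would be accepted as a proof of necessity and as a reasonable \emph{strategy statement} for sufficiency, but not as a proof of sufficiency.
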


For more information on Heffter arrays consult \cite{arc1,arc2,ADDY,DW}.

A {\em signed magic array} $SMA(m,n;s,t)$ is an $m \times n$ array with entries from $X$, where
$X=\{0,\pm1,\pm2, \pm3,\ldots,\pm (ms-1)/2\}$ if $ms$ is odd and $X = \{\pm1,\pm2, \pm3,\ldots,\pm ms/2\}$ if $ms$ is even,
such that precisely $s$ cells in every row and $t$ cells in every column are filled,
every integer from set $X$ appears exactly once in the array and
the sum of each row and of each column is zero.
An $SMA(m,n;s,t)$ is called {\em tight}, and denoted $SMA(m,n)$, if it contains no empty cells; that is $m=t$
(and necessarily $n=s$). Figure \ref {3x2.3x4.4x2} displays tight $SMA(3,2)$, $SMA(3,4)$ and $SMA(4,2)$.

\begin{figure}[ht]
$$\begin{array}{ccc}
    \begin{array}{|c|c|}
	\hline
	1 & -1 \\\hline
	2 & -2 \\\hline
	-3 & 3 \\\hline
	\end{array}&
	\begin{array}{|c|c|c|c|}\hline
	1 & -1 & 2 & -2 \\\hline
	5 & 4 & -5 &-4 \\\hline
	-6 & -3 & 3 & 6 \\\hline
	\end{array}&
\begin{array}{|c|c|}\hline	
       1&-1 \\ \hline	
      -2&2\\ \hline	
      -3&3 \\ \hline
      4&-4\\ \hline\end{array}
\end{array}$$
\caption{An $SMA(3,2)$, $SMA(3,4)$ and $SMA(4,2)$ }
		\label{3x2.3x4.4x2}	
\end{figure}

An $SMA(m, n; s, t)$ is {\em shiftable} if it contains the same number of positive as negative entries in every column and in every row.
These arrays are called \textit{shiftable} because they may be shifted to use different absolute values. By increasing the absolute value of each entry by $k$, we add $k$ to each positive entry and $-k$ to each negative entry. If the number of entries in a row is $2\ell$, this means that we add $\ell k + \ell(-k) = 0$ to each row, and the same argument applies to the columns. Thus, when shifted, the array retains the same row and column sums.
The proof of the following theorem can be found in \cite{KSW}.

\begin{theorem} \label{TH:KSW1}
	An $SMA(m,n)$ exists precisely when $m = n = 1$, or when $m = 2$ and $n \equiv 0, 3 \pmod4$, or when $n = 2$ and $m \equiv 0, 3 \pmod4$, or when $m, n > 2$.
\end{theorem}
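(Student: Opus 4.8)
The plan is to establish the equivalence by treating necessity and sufficiency separately, with the sufficiency direction carried by explicit base constructions glued together using shiftable arrays.

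For necessity, I would first rule out degenerate shapes. If $m=1$ then each of the $n$ columns is a single cell equal to its own column sum, hence $0$; since $0$ occurs at most once in $X$ this forces $n=1$, and dually $n=1$ forces $m=1$, which is the case $m=n=1$. Now suppose $m=2$, so $X=\{\pm1,\dots,\pm n\}$. Each column has two entries summing to $0$ and is therefore a pair $\{a_j,-a_j\}$, and since every symbol is used once the numbers $|a_1|,\dots,|a_n|$ are a permutation of $1,\dots,n$. Writing the first row as $\varepsilon_1|a_1|,\dots,\varepsilon_n|a_n|$ with $\varepsilon_j\in\{1,-1\}$, a zero row sum is exactly a splitting of $\{1,\dots,n\}$ into two blocks of equal sum, which exists if and only if the total $n(n+1)/2$ is even, i.e. if and only if $n\equiv 0,3\pmod{4}$. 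The transposed argument gives the condition $m\equiv0,3\pmod{4}$ when $n=2$, and in particular shows $SMA(2,2)$ cannot exist. Since no further constraint appears when $m,n>2$, necessity is complete.

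For sufficiency I would construct the arrays explicitly. The case $m=n=1$ is the single cell $0$; for $m=2$ with $n\equiv0,3\pmod{4}$ (and dually $n=2$, $m\equiv0,3\pmod{4}$) the balanced splitting produced in the necessity analysis directly fills the two rows with $\pm$-paired columns. The heart of the theorem is the range $m,n>2$, which I would handle through shiftable arrays. The gluing principle I would use is: given an $SMA(m,n_1)$ on $\{\pm1,\dots,\pm mn_1/2\}$ and a shiftable $SMA(m,n_2)$, shift the absolute values of the latter up so that they occupy the next block $\{mn_1/2+1,\dots,m(n_1+n_2)/2\}$ and juxtapose it to the right; because the shifted block is sign-balanced in every line, its row and column sums stay zero, and the result is an $SMA(m,n_1+n_2)$ on the full symmetric range. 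The dual principle appends rows. Shiftability forces the appended block to have both dimensions even, and such sign-balanced blocks are easy to produce, for instance by juxtaposing shifted copies of the balanced seed $SMA(2,4)$ whose top row is $1,-2,-3,4$ with its negative beneath.

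Using these two principles I would reduce every $(m,n)$ with $m,n>2$ and at least one even coordinate: whenever the coordinate transverse to the direction of growth is even we may enlarge the other coordinate by $2$, so the both-even shapes all descend to a single small seed (the shiftable $SMA(4,4)$ built as above), while the shapes with exactly one even coordinate descend to the width-$3$ family $SMA(3,n)$ and the height-$3$ family $SMA(m,3)$, starting from the $SMA(3,4)$ of Figure~\ref{3x2.3x4.4x2}. The main obstacle I anticipate is the all-odd regime, namely $SMA(m,n)$ with $m$ and $n$ both odd, together with those two strip families, because there every transverse dimension is odd, no sign-balanced block can be appended, and the step-by-two induction stalls. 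I would attack these by writing down direct cyclic constructions valid for the whole family at once: distribute the consecutive absolute values along broken diagonals and fix a parity-based sign rule so that, up to a small controllable discrepancy confined to a few lines, each row and column is sign-balanced, then hand-correct the finitely many residual sums. Proving that one such uniform pattern simultaneously uses each symbol of $X$ exactly once and zeroes every row and column for \emph{all} odd $m,n\ge 3$ (and for the two width/height-$3$ families), rather than merely for small cases, is the delicate point, and I expect it to require a careful separate analysis of the congruence classes of $m$ and $n$ modulo $4$.
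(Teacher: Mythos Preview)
The paper does not prove this theorem; it is quoted from \cite{KSW} with the remark ``The proof of the following theorem can be found in \cite{KSW}.'' There is therefore no in-paper argument to compare against.

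As for your sketch on its own merits: the necessity half is correct and complete, and the shiftable-gluing strategy for sufficiency is indeed the method of \cite{KSW}. Two points deserve tightening. First, your claim that ``the both-even shapes all descend to a single small seed'' using $2\times 4$ tiles is slightly off: $2\times 4$ tiles only produce shiftable blocks when one side is a multiple of $4$, so a shape like $6\times 6$ is not reached that way and needs its own seed (compare Theorem~\ref{m,n even>=4} of the present paper, which takes $(4,4),(6,4),(4,6),(6,6)$ as base cases and steps by $4$ rather than $2$). Second, and more substantively, the both-odd regime and the width/height-$3$ families are where the real work lies, as you yourself flag; your proposed ``cyclic placement with parity signs plus hand correction'' is only a hope, not a construction. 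In \cite{KSW} these cases are handled by several explicit families split according to the residues of $m$ and $n$ modulo $4$, and your plan does not yet supply any of that content.
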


\begin{corollary}\label{SMA(m,2)}
Let $n=2$. Then there exists an $SMA(m, 2)$ such that the entries $\pm x$  are in the same row for every
$x\in\{1, 2,3,\ldots, m\}$
 if and only if $m\equiv 0, 3\pmod 4$.
\end{corollary}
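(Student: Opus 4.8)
The plan is to observe that the requested ``same row'' property is automatic when $n=2$, so that the corollary reduces to the pure existence statement already supplied by Theorem \ref{TH:KSW1}. First I would note that in a tight $SMA(m,2)$ every cell is filled, so each row has exactly two entries; since their sum is zero, the two entries in any given row are $a$ and $-a$ for some $a$. As every element of $\{\pm 1,\pm 2,\ldots,\pm m\}$ occurs exactly once, the rows are precisely the pairs $\{x,-x\}$ for $x\in\{1,2,\ldots,m\}$. Hence $+x$ and $-x$ always share a row, and the property demanded in the corollary is satisfied by \emph{every} $SMA(m,2)$. This is the one observation the whole argument turns on.

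Given this reduction, the corollary is equivalent to the assertion that an $SMA(m,2)$ exists if and only if $m\equiv 0,3\pmod 4$, and the sufficiency is exactly the $n=2$ instance of Theorem \ref{TH:KSW1}. For a self-contained argument I would instead build the array directly. Relabelling so that row $i$ holds $\{i,-i\}$, and placing $\sigma_i\, i$ in column~1 and $-\sigma_i\, i$ in column~2 with each $\sigma_i\in\{+1,-1\}$, all row sums vanish automatically, and both column sums vanish if and only if $\sum_{i=1}^{m}\sigma_i\, i=0$. When $m\equiv 0\pmod 4$ I would sign each consecutive block $\{4k+1,4k+2,4k+3,4k+4\}$ with the pattern $+,-,-,+$, so that each block contributes $0$; when $m\equiv 3\pmod 4$ I would first set $+1+2-3=0$ and then treat the remaining values $\{4,5,\ldots,m\}$, a multiple of four in number, by the same block pattern.

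For the necessity direction, the same reduction shows that an $SMA(m,2)$ forces a sign choice with $\sum_{i=1}^{m}\sigma_i\, i=0$, equivalently a partition of $\{1,2,\ldots,m\}$ into two parts of equal sum; this requires $m(m+1)/2$ to be even, which happens exactly when $m\equiv 0,3\pmod 4$. I do not expect a genuine obstacle here: the only subtle point is recognizing at the outset that, once $n=2$, the ``same row'' condition imposes nothing beyond mere existence, after which the result follows either by citing Theorem \ref{TH:KSW1} or by the elementary signed-sum construction above.
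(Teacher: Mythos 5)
Your proposal is correct, and its first half is exactly the paper's (unwritten) argument: the paper states this as a corollary of Theorem \ref{TH:KSW1} with no proof, the implicit point being precisely the observation you lead with --- in a tight $SMA(m,2)$ each row has two entries summing to zero, so every row is automatically a pair $\{x,-x\}$, and the ``same row'' condition imposes nothing; existence then holds if and only if $m\equiv 0,3\pmod 4$ by the $n=2$ case of Theorem \ref{TH:KSW1}. Where you go beyond the paper is in making the corollary self-contained: your reduction to a signing $\sum_{i=1}^{m}\sigma_i\, i=0$, the block pattern $+,-,-,+$ on $\{4k+1,\ldots,4k+4\}$ (prefixed by $1+2-3=0$ when $m\equiv 3\pmod 4$), and the necessity argument via the parity of $m(m+1)/2$ are all correct and replace the citation with an elementary two-paragraph proof. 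What the paper's route buys is brevity, since Theorem \ref{TH:KSW1} is already on hand; what yours buys is independence from \cite{KSW} for this case and an explicit witness of the equivalence between $SMA(m,2)$ and equal-sum partitions of $\{1,\ldots,m\}$, which is arguably more illuminating. One small point worth stating explicitly in your necessity direction: the absolute values appearing in the $m$ rows are pairwise distinct and hence exhaust $\{1,\ldots,m\}$ (each value of $X$ appears exactly once in the array), which is what licenses indexing the rows by $i\in\{1,\ldots,m\}$ before reading off the column-1 signs $\sigma_i$; you use this implicitly and it is immediate, but it is the one step a careful referee would ask you to spell out.
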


We also note that if $A$ is an $m \times n$ tight integer
Heffter array, then the $m\times 2n$ array $[A,-A]$ is an $SMA(m,2n)$ with the property
that the entries $\pm x$ appear in the same row for every $x\in\{1, 2, \ldots, mn\}$.
For more information on signed magic arrays consult \cite{KLE1, KL,AE,KSW}.

In this paper, we prove that for every even number $n\geq 2$ there exists an $SMA(m,n)$ such that
the entries $\pm x$ appear in the same row for every $x\in\{1, 2, 3,\ldots, mn/2\}$ if and only if
 $m\equiv 0, 3(\mod4)$ and $n=2$ or $m\geq 3$ and $n\geq 4$.

For simplicity, we say an $SMA(m,n)$, with $n$ even, has the {\it required property} if
the entries $\pm x$ appear in the same row for every $x\in\{1, 2, 3,\ldots, mn/2\}$.

\section{The case $m$ and $n$ are even}

\begin{theorem}\label {m,n even>=4}
Let $m, n\geq 4$ and even. Then there exists a shiftable $SMA(m,$ $n)$
such that the entries $\pm x$  appear in the same row for  every $x\in\{1, 2, 3,\ldots,$ $(mn/2)\}.$
\end{theorem}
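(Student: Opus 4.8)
The plan is to reduce the whole construction to a handful of small \emph{building blocks} and then assemble the $m\times n$ array as a rectangular grid of shifted copies of these blocks. Call an $a\times b$ array a \emph{good block} if it is a shiftable $SMA(a,b)$ with the required property (each pair $\pm x$ in a common row) in which, moreover, \emph{every single column} sums to zero. The point of isolating this notion is that good blocks concatenate freely. If we place good blocks with the same number of rows side by side, each row of the big array is a concatenation of block-rows, so the required property and the zero row-sums are inherited, while each big column lies inside a single block and hence still sums to zero and is still sign-balanced; the analogous statement holds when good blocks of equal width are stacked. Thus tiling the $m\times n$ array by a grid of good blocks produces a shiftable array with zero row and column sums enjoying the required property. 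To make the absolute values range over $\{1,\dots,mn/2\}$ exactly once, I would shift the block occupying the $k$th tile (by the shifting argument of Section~\ref{introduction} this alters no row or column sum) so that its $a_kb_k/2$ absolute values fill the next unused interval of integers; since the tiles partition the $mn$ cells, these intervals partition $\{1,\dots,mn/2\}$.

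Because $m$ and $n$ are even and at least $4$, each is either $4a$ or $4a+2=4(a-1)+6$; that is, each dimension is a sum of $4$'s together with at most one $6$. Tiling accordingly, every tile has both side lengths in $\{4,6\}$, so it suffices to exhibit four good blocks, of sizes $4\times4$, $4\times6$, $6\times4$ and $6\times6$. Here I would emphasize \emph{why} a $6$ rather than a $2$ is used for a dimension $\equiv 2\pmod 4$: a good block of width $2$ would have each of its two columns equal to a signed arrangement of the same $a$ absolute values summing to zero, forcing $1+2+\cdots+a$ to be even and hence $a\equiv 0,3\pmod 4$ (exactly the obstruction behind Corollary~\ref{SMA(m,2)}); dually, a good block of height $2$ would force its two rows to repeat absolute values. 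So for the residue $2\pmod 4$ the short strip must have length $6$, and the corresponding base blocks cannot be split further.

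For the ``split'' sizes the blocks are easy: a good $4\times4$ block is obtained by placing two shiftable copies of the $SMA(4,2)$ of Figure~\ref{3x2.3x4.4x2} side by side (shifting the second by $4$), and more generally any $4$-wide strip can be realized as two sign-balanced $SMA(\cdot,2)$'s with the property. The real work, and the main obstacle, is the monolithic construction of the blocks whose short side is $6$ (namely $6\times4$, $4\times6$ and $6\times6$), where the naive $2$-strip is forbidden. For these I would assign absolute values to columns through \emph{equal-sum subsets}: partition the positive values into subsets of equal size, grouped into equal-sum pairs $\{S,S'\}$, and for each pair put $+S,-S'$ in one column and $+S',-S$ in its partner column. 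Then every column is sign-balanced and sums to zero, while $+a$ and $-a$ always land in different columns; the entries can then be threaded into rows so that the two columns carrying $\pm a$ and the two carrying $\pm b$ together cover all columns of a given row, which delivers the required property.

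A small instance of this device already yields a shiftable good $6\times4$ block (for example one may take column triples summing to $19,19,20,20$, crossed in pairs), and the same idea, carried out once for each of the three sizes with the short side $6$, completes the list of base blocks. Once all four are in hand, the grid assembly of the first paragraph produces the desired shiftable $SMA(m,n)$ for every even $m,n\ge 4$, so the only genuinely delicate point is verifying the three monolithic $6$-strip blocks; everything else is bookkeeping about concatenation and shifting.
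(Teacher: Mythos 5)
Your overall architecture is sound and is essentially the paper's proof in unrolled form: the paper proceeds by double induction, pasting shifted copies of the four base arrays $4\times4$, $6\times4$, $4\times6$ and $6\times6$ (Figures \ref{4x4,6x4} and \ref{4x6,6x6}), which is exactly your tiling of each dimension into $4$'s and at most one $6$; your observations about shifting, concatenation, and the impossibility of height-$2$ or width-$2$ strips for the residue $2\pmod4$ are all correct. The gap is in the one place you yourself identify as the genuinely delicate point: the equal-sum-pairs device provably cannot produce the $6\times6$ block. That device requires partitioning the positive values into subsets grouped into pairs $\{S,S'\}$ with $\sum S=\sum S'$, which forces the total positive sum to be even; but a $6\times6$ block uses $\{1,2,\ldots,18\}$ with $1+2+\cdots+18=171$, which is odd. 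So no partition of the required kind exists, and the sentence ``the same idea, carried out once for each of the three sizes with the short side $6$, completes the list of base blocks'' is false for the $6\times6$ case. Note that this case cannot be avoided in your scheme: when $m\equiv n\equiv 2\pmod4$ the tiling necessarily contains exactly one $6\times6$ tile.

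The device does work for the other two monolithic blocks --- for $6\times4$ your triples summing to $19,19,20,20$ (e.g.\ $\{1,6,12\},\{2,8,9\},\{3,7,10\},\{4,5,11\}$) succeed, and for $4\times6$ one may take the six pairs $\{1,12\},\{2,11\},\{3,10\},\{4,9\},\{5,8\},\{6,7\}$, all of sum $13$ --- so the defect is localized entirely to $6\times6$. Repairing it requires a construction outside your crossed-column format: the paper's explicit $6\times6$ array in Figure \ref{4x6,6x6} is of a more irregular shape (e.g.\ its row $3,-3,-9,10,-10,9$ pairs values across columns whose absolute-value sets do not mirror each other), consistent with the parity obstruction. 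With any valid $6\times6$ block substituted in, the rest of your argument goes through and matches the paper's.
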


\begin{proof}

Proceed by strong induction first on $n$ and then on $m$. As the base case, we provide arrays for $(m, n) = (4, 4)$, $(6, 4)$, $(4,6)$ and $(6,6)$ in Figures \ref{4x4,6x4} and \ref{4x6,6x6}, respectively.

Now, let $m \in \{4, 6\}$ and $n$ be even, and assume that there exists a shiftable $SMA(m,$ $n - 4)$ with the required property. We may extend this array by adding four columns to create an $m \times n$ array. The empty $m\times 4$ array
may be filled by a shifted copy of the $SMA(4,4)$ or $SMA(6, 4)$ in Figure \ref{4x4,6x4}.

 As the shifted copies each has a row and column sum of zero, they do not change the row sums from the $m \times (n - 4)$ array, and the sums of the new columns will be zero as well. Moreover, the shifted copies have the required property. Therefore, a shiftable $SMA(m, n)$ exists with the required property. Hence, by strong induction on $n$,
a shiftable $SMA(m, n)$ exists for $m \in \{4, 6\}$ and $n \geq 4$ even
such that every  entries $\pm x$  appear in the same row for  every $x\in\{1, 2, 3,\ldots,(mn/2)\}.$
See Figure \ref{6x10} for an illustration.

	Now, let $m$ and $n$ both be even, and $m,n\geq 4$. Assume that there exists a shiftable $SMA(m - 4,n)$ with the required property. We may extend this array by adding four rows to create an $m \times n$ array.
The empty $4\times n$ array
may be filled by a shifted copy of a shiftable $SMA(4,n)$ with the required property, which exists by the above  argument.

Hence, by strong induction on $n$,
a shiftable $SMA(m, n)$ exists for $m, n\geq 4$ and even
such that every  entries $\pm x$  appear in the same row for  every $x\in\{1, 2, 3,\ldots,(mn/2)\}.$
	
\end{proof}

\begin{figure}[ht]
$$\begin{array}{ccc}
\begin{array}{|c|c|c|c|}\hline
	 1&-1 &5&-5 \\ \hline
	 -2&2&-6&6 \\ \hline
	  -3&3&-7&7 \\ \hline
      4&-4&8&-8\\ \hline
\end{array}&&
\begin{array}{|c|c|c|c|}\hline
2&	-2&	6&	-6 \\ \hline
4&	-4&	-8&	8   \\ \hline
-1&	1&	12&	-12  \\ \hline
-3&	3&	-9&	9  \\ \hline
5&	-5&	10&	-10  \\ \hline
-7&	7&	-11&11 \\ \hline
\end{array}
\end{array}$$
\caption{$SSMA(4, 4)$ and $SSMA(6,4)$ with the required property}
\label{4x4,6x4}
\end{figure}

\begin{figure}[ht]
$$\begin{array}{ccc}
\begin{array}{|c|c|c|c|c|c|}\hline
-1&	1&	-5&	5&	-9&	9\\ \hline
2&	-2&	6&	-6&	10&	-10 \\ \hline
3&	-3&	7&	-7&	11&	-11 \\ \hline
-4& 4&	-8&	8&	-12&	12 \\ \hline
\end{array}&&
\begin{array}{|c|c|c|c|c|c|}\hline
1&	-1&	4&	-4&	12&	-12 \\ \hline
2&	-2&	-8&	8&	14&	-14 \\ \hline
3&	-3&	-9&	10&	-10&	9  \\ \hline
-5&	5&	11&	-16&	-11&	16  \\ \hline
6&	-6&	-13&	17&	13&	-17  \\ \hline
-7&	7&	15&	-15&	-18&	18   \\ \hline
\end{array}
\end{array}$$
\caption{$SSMA(4,6)$ and $SSMA(6,6)$ with the required property}
\label{4x6,6x6}
\end{figure}

\begin{figure}[ht]
$$\begin{array}{ccc}
\begin{array}{|c|c|c|c|c|c||c|c|c|c|}\hline
1&	-1&	4&	-4&	12&	-12&     20&	-20&	24&	-24\\ \hline
2&	-2&	-8&	8&	14&	-14& 22&	-22&	-26&	26 \\ \hline
3&	-3&	-9&	10&	-10&	9& -19&	19&	30&	-30  \\ \hline
-5&	5&	11&	-16&	-11&	16& -21&	21&	-27&	27  \\ \hline
6&	-6&	-13&	17&	13&	-17& 23&	-23&	28&	-28  \\ \hline
-7&	7&	15&	-15&	-18&	18& -25&	25&	-29&29   \\ \hline
\end{array}
\end{array}$$
\caption{$SSMA(6,10)$ with the required property obtained by constructions given in Theorem \ref{m,n even>=4}}
\label{6x10}
\end{figure}

\section{The case $m$ odd and $n$ even}

Since the structure of the $SMA(3,n)$ given below is crucial in our constructions, we include the proof of this lemma here which can also be found in \cite{KSW}.

\begin{lemma} \label{3xeven}
Let $n$ be even. Then there exists an $SMA(3,n)$ with the property that the entries $\pm x$  appear in the same row for every $x\in\{1, 2, 3,\ldots,(3n/2)\}$.
\end{lemma}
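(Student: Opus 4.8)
The plan is to construct an explicit $SMA(3,n)$ for every even $n$ and to arrange the construction so that $\pm x$ always land in the same row. Since the whole array has three rows and $n$ columns with $3n$ filled cells, the symbols used are $\pm1,\ldots,\pm(3n/2)$, so each row must accommodate exactly $n$ cells, i.e.\ $n/2$ complementary pairs $\{x,-x\}$. The key structural idea I would use is to fill the array column by column in pairs: pair up the $n$ columns into $n/2$ blocks of two adjacent columns, and in each such $3\times2$ block place a small gadget whose three rows each sum to zero and whose two columns each sum to zero, while using a consecutive block of three values $\{a,a+1,a+2\}$ together with their negatives. Concretely, a single $3\times 2$ block of the form $\begin{smallmatrix} a & -a\\ -b & b\\ c & -c\end{smallmatrix}$ already has zero column sums and, because each row holds a value and its negative, automatically has zero row sums and the required $\pm x$-in-the-same-row property; the remaining freedom is to choose the triples $\{a,b,c\}$ across the $n/2$ blocks so that together they partition $\{1,2,\ldots,3n/2\}$.

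The main steps, in order, would be: first, set up the column-pairing and index the blocks $1,\ldots,n/2$; second, assign to block $j$ a triple of absolute values so that as $j$ ranges over all blocks the triples exhaust $\{1,\ldots,3n/2\}$ exactly once — the natural choice is to let block $j$ use $\{j,\ j+n/2,\ j+n\}$ (or a similarly striped assignment) so each of the three residue-classes-by-$n/2$ feeds one row; third, fix the signs within each block as in the gadget above so that every column of the $3\times 2$ block sums to zero. Because the gadget's rows trivially sum to zero (they contain $x$ and $-x$), the row-sum condition is free, and the placement $x,-x$ in the same row gives the required property directly. I would then verify that this choice uses every value in $\{1,\ldots,3n/2\}$ exactly once and that each row ends up with $n$ entries, completing the count.

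The only genuine constraint to check, and hence the step I expect to be the main obstacle, is the \emph{column-sum} condition $a+(-b)+c=0$ is not what we need — rather we need each column of the block to sum to zero, i.e.\ $a-b+c=0$ in the left column and its negation in the right. A naive striped triple $\{j,j+n/2,j+n\}$ does \emph{not} satisfy $a-b+c=0$ in general, so the real work is to choose, for each block, a sign pattern and an ordering of the three values across the three rows so that the two column sums vanish while still exhausting all absolute values. I would therefore look for an arithmetic identity or a small case split (e.g.\ according to $n\bmod 4$, matching the $\{1,2,3,\ldots,(3n/2)\}$ structure) that pairs up blocks whose leftover column discrepancies cancel, or alternatively permute which value goes in which row so that the middle row carries the ``sum'' of the outer two. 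Establishing such an explicit, uniformly-defined sign-and-order rule, and proving it yields zero column sums for every even $n$, is the crux; once that rule is pinned down, the row sums, the $\pm x$ property, and the bookkeeping that every symbol is used exactly once all follow immediately from the block structure.
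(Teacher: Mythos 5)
There is a genuine gap, and it sits exactly where you predicted: the column-sum condition for your $3\times 2$ blocks is not merely awkward, it is impossible for infinitely many $n$. In your gadget each row of a block holds $\pm x_i$, so the left column sum is $\epsilon_1 x_1+\epsilon_2 x_2+\epsilon_3 x_3$ with $\epsilon_i=\pm1$; making this vanish forces one of the three absolute values to equal the sum of the other two. Hence your plan needs a partition of $\{1,2,\ldots,3n/2\}$ into $n/2$ triples $\{a,b,c\}$ with $a+c=b$ (a Skolem-type partition). But each such triple has sum $2b$, which is even, so the total $\sum_{x=1}^{3n/2}x=\frac{(3n/2)(3n/2+1)}{2}$ must be even; this fails whenever $n/2\equiv 2,3\pmod 4$, i.e.\ $n\equiv 4,6\pmod 8$. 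The smallest case already kills it: for $n=4$ the sum $1+2+\cdots+6=21$ is odd, so no choice of sign pattern, row permutation, or assignment of triples can make your block construction work. Your proposed fallbacks do not rescue it: permuting which value sits in which row cannot evade the parity obstruction, and ``pairing up blocks whose leftover column discrepancies cancel'' is structurally impossible here, because the blocks tile the array horizontally --- every column lies entirely inside one block, so each column sum of the array \emph{equals} its block's column sum and nonzero discrepancies can never cancel across blocks. (For $n\equiv 0,2\pmod 8$ your approach is viable and in fact yields a shiftable array, e.g.\ for $n=8$ take the triples $(1,11,12),(4,6,10),(2,7,9),(3,5,8)$; but even there you have not exhibited the partition, since your striped triples $\{j,j+n/2,j+n\}$ do not satisfy $a+c=b$.)

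The paper's construction avoids this trap precisely by abandoning the block structure in two of the three rows. Row one does have your adjacent-pair shape ($x,-x$ in consecutive columns), but in rows two and three the entries $x$ and $-x$ sit in the same row yet in \emph{non-adjacent} columns, arranged in a staircase pattern: row three is built explicitly from multiples of $3$ (with $a_{3,1}=-3k$, $a_{3,2k}=3k$), and row two is then defined columnwise by $a_{2,j}=-(a_{1,j}+a_{3,j})$, which makes all column sums zero by fiat; the work is in checking that the resulting row-two values are exactly the unused symbols, each appearing once with both signs in that row, and that row two sums to zero. So the lemma requires either that kind of global interleaving or a case split handling $n\equiv 4,6\pmod 8$ by a fundamentally different construction --- your uniform per-block identity cannot exist for those $n$.
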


\begin{proof}
An $SMA(3,2)$ and an $SMA(3,4)$ are given in Figure \ref{3x2.3x4.4x2}.

    Now let $n=2k\geq 6$ and $p_{j}=\lceil\frac{j}{2}\rceil$ for $1\leq j\leq 2k$ . Define a $3\times n$ array $A=[a_{i,j}]$ as follows: For $1\leq j\leq 2k$,

$$a_{1,j}= \begin{cases}
	- \left(\frac{3p_{j}-2}{2}\right) & j \equiv 0 \pmod4 \\
	\frac{3p_{j}-1}{2}& j \equiv 1 \pmod4 \\
	-\left(\frac{3p_{j}-1}{2}\right) & j \equiv 2 \pmod4 \\
	\frac{3p_{j}-2}{2} & j \equiv 3 \pmod4. \\
	
\end{cases}$$
For the third row we define $a_{3,1}=-3k$, $a_{3,2k}=3k$ and when $2\leq j\leq 2k-1$

$$a_{3,j}= \begin{cases}
	- 3(k-p_{j}) & j \equiv 0\pmod4 \\
	3(k-p_{j}+1) & j \equiv 1 \pmod4 \\
	-3(k-p_{j}) & j \equiv 2 \pmod4 \\
	3(k-p_{j}+1) & j \equiv 3 \pmod4. \\
\end{cases}$$
Finally, $a_{2,j}=-(a_{1,j}+a_{3,j})$ for $1\leq j\leq2k$.
It is straightforward to see that array $A$ is an $SMA(3,n)$
with the property that the entries $\pm x$
appear in the same row for every $x\in\{1, 2, 3,\ldots,(5n/2)\}$.
Figure \ref{SMA 3x12} in Appendix 1 displays an $SMA(3, 12)$ constructed by above method.
\end{proof}

\begin{lemma} \label{5xeven}
Let $n\geq 4$ and even. Then there exists an $SMA(5,n)$ with the property that the entries $\pm x$
appear in the same row for every entry $x\in\{1, 2, 3,\ldots,(5n/2)\}$.
\end{lemma}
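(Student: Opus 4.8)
The plan is to construct an $SMA(5,n)$ directly for the required even values of $n$, following the same philosophy used elsewhere in the paper: handle a small number of base cases explicitly, then extend by appending blocks whose row and column sums vanish. Since $n\geq 4$ is even, I would first establish the base cases $n=4$ and $n=6$ by exhibiting explicit $5\times 4$ and $5\times 6$ arrays (each entry $\pm x$ in a common row, every row and column summing to zero, using exactly the values $\{\pm1,\ldots,\pm 5n/2\}$). I expect these two small arrays to serve as the induction anchors, because every even $n\geq 4$ can be reached from either $n=4$ or $n=6$ by repeatedly adding columns in groups of four.

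The core of the argument is a column-extension step mirroring the proof of Theorem \ref{m,n even>=4}. Given an $SMA(5,n-4)$ with the required property, I would append four new columns and fill the empty $5\times 4$ block with a shifted copy of the base $SMA(5,4)$. The key point is that the base $5\times 4$ block must be \emph{shiftable}: since it has an even number ($4$) of filled cells in each row and an even number ($5$ is odd, but each column has exactly the appropriate parity—here I must be careful) of filled cells suited to shifting. Concretely, for the shift argument to preserve zero row sums after increasing every absolute value by $k$, the $5\times 4$ block must contain equally many positive and negative entries in each row; each row has four entries, so this is achievable. After shifting the base block upward by the $20$ or $30$ absolute values already consumed, its entries occupy the next block of consecutive integers, its row sums remain zero (so the row sums of the combined array are unchanged), and its four new column sums are each zero. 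Thus the extended array is an $SMA(5,n)$ retaining the required property, and strong induction on $n$ in steps of four—starting from $n=4$ for $n\equiv 0\pmod 4$ and from $n=6$ for $n\equiv 2\pmod 4$—completes the construction.

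The main obstacle, and the step demanding real care, is the explicit construction of the base arrays $SMA(5,4)$ and $SMA(5,6)$ so that they are simultaneously shiftable and have the pairing property. The difficulty is that $5$ is odd: in each of the four columns there are five filled cells, so a column cannot contain equally many positive and negative entries, meaning the \emph{column} shift-balance fails cell-by-cell. One must instead arrange that each \emph{row} is balanced (two positives and two negatives for $n=4$, three and three for $n=6$), which suffices for the row-sum invariance under shifting, while the column sums are forced to zero by direct construction rather than by symmetry. Verifying that such an arrangement exists with all $5n/2$ pairs $\{\pm x\}$ placed within single rows and all columns summing to zero is the genuinely delicate part; once the two base blocks are exhibited and checked, the inductive extension is routine. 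I would therefore devote the bulk of the write-up to presenting these two base arrays in figures and confirming their properties, then dispatch the induction in a short paragraph.
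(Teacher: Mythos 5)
Your induction anchor cases are fine (the paper itself exhibits an $SMA(5,4)$ and an $SMA(5,6)$), but the column-extension step contains a genuine, unfixable gap, and you half-noticed it before waving it away. A column of the appended $5\times 4$ block has five cells, so if it contains $p$ positive and $5-p$ negative entries, shifting every absolute value up by $k$ changes that column's sum by $(2p-5)k$, a nonzero odd multiple of $k$. Hence your claim that after shifting ``its four new column sums are each zero'' is false for every possible sign arrangement; row balance saves the row sums but nothing can save the columns. Nor can the fallback you propose --- forcing the column sums to zero ``by direct construction rather than by symmetry'' --- succeed: the block must use the absolute values $c+1,\dots,c+10$ with $c=5(n-4)/2$, so each column sum equals $(p-q)c+s$ with $p-q$ odd and $|s|\le 46$; for $n\ge 24$ this is nonzero in every column, so no $5\times 4$ block on the required values with zero column sums exists at all. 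The obstruction is precisely that you are appending columns across an \emph{odd} number of rows, which is why the analogous step is legitimate in Theorem \ref{m,n even>=4} (both dimensions even) but not here.

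The paper avoids this parity trap by never inducting on $n$ when $m=5$. Instead it takes the $SMA(3,n)$ of Lemma \ref{3xeven} and grows it in the \emph{row} direction: selected pairs in row one are swapped (and, when $n\equiv 2\pmod 4$, two entries of row two as well) so that all row sums remain zero while the column sums become small controlled discrepancies ($\pm1$, plus $\pm2$ in the $n\equiv 2\pmod 4$ case); then two new rows are filled with the values $\pm(3n/2+j)$, $1\le j\le n$, arranged so that each column receives a pair of large entries whose sum cancels its discrepancy. Adding two rows gives every column an even number of new cells, which is exactly the parity your scheme lacks. If you want to salvage an inductive structure, the viable direction (and the one the paper uses in Theorem \ref{maintheorem}) is to extend a fixed odd-height base array by shiftable blocks with an \emph{even} number of rows, never by blocks spanning all five rows.
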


\begin{proof}
We consider two cases.

\vspace{3mm}
\noindent {\bf Case 1: $n\equiv 0 \pmod 4$:}\quad
Let $A$ be an $SMA(3,n)$ constructed by Lemma \ref{3xeven}.
By construction, the entries in row one of $A$ are:
$$\{\pm(3i+1),\pm(3i+2)\mid 0\leq i\leq(n-4)/4\}\;  \mbox{(See Figure \ref{SMA 3x12} in Appendix 1)}.$$

Switch $3i+1$ with $3i+2$ and $-(3i+1)$ with $-(3i+2)$ for $0\leq i\leq (n-4)/4$ to obtain the $3 \times n$ array
$B$. See Figure \ref{Array 3 by 12} in Appendix 1.
Note that the row sum is still zero in $B$ and the column sums consists of $n/2$ ones and $n/2$ negative ones.

 We now extend array $B$ by adding two rows to create a $5 \times n$ array $C$. The empty $2\times n$ array
may be filled by members of $\{\pm(3n/2)+j\mid 1\leq j\leq n\}$ such that the row sum and column sum of the resulting
$5 \times n$ array are zero. See Figure \ref {SMA(5, 12)} in Appendix 1.


\vspace{3mm}
\noindent {\bf Case 2: $n\equiv 2\pmod 4$}: \quad Figure \ref{SMA(5, 6)}
displays an $SMA(5,6)$ with the required property.

Now let $n\geq 10$ and let $A$ be an $SMA(3,n)$ constructed by Lemma \ref{3xeven}.
By construction, the numbers in row one of $A$ are:

$$\{\pm(3i+1),\pm(3i+2)\mid 0\leq i\leq(n-6)/4\}\cup\{\pm (\frac{3n-2}{4})\}.$$
See Figure \ref{SMA (3x10)}   in Appendix 2.

In row one of $A$ Switch $3i+1$ with $3i+2$ and $-(3i+1)$ with $-(3i+2)$ for $0\leq i \leq (n-10)/4$ and
switch $\frac{3(n-6)}{4}+1$ with   $\frac{3(n-6)}{4}+2$. Note that we switch every entries in row one except
the entries in columns $n-5, n-2, n-1$ and $n$.
Now we switch the entries in row two and columns $n-5, n-2, n-1$ and $n$ as follows:
switch $\frac{-(3n+2)}{4}$ with $\frac{-(3n+10)}{4}$ and $\frac{3n+2}{4}$ with $\frac{3n+10}{4}$
to obtain the $3 \times 10$ array $B$. See Figure \ref{Array 3 by 10} in Appendix 2.

Note that the row sum is still zero in $B$ and the column sums consists of $\frac{n-4}{2}$ ones, $\frac{n-4}{2}$ negative ones, 2 twos and 2 negative twos.

 We now extend array $B$ by adding two rows to create a $5 \times n$ array. The empty $2\times n$ array
may be filled by members of $\{\pm(3n/2)+j\mid 1\leq j\leq n\}$ such that the row sum and column sum of the resulting
$5 \times n$ array, Say C, are zero.
See Figure \ref{MA(5, 10)} in Appendix 2.

\end{proof}

\begin{theorem}\label{maintheorem}
Let $m\geq 3$ be odd and $n$ be even. There exists an $SMA(m,n)$
such that $\pm x$  appear in the same row for $x\in\{\pm1, \pm2,\ldots \pm(mn/2)\},$
if and only if $m\equiv 0, 3\pmod 4$ and $n=2$ or $m\geq 3$ and $n\geq 4$.
\end{theorem}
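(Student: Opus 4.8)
The plan is to treat the two directions of the biconditional separately, beginning with the necessity, which is the quick part. For the forward direction I would observe that the \emph{required property}—that $\pm x$ always share a row—forces each row to contain an even number of filled cells, since the entries pair up into $\{x,-x\}$ couples within a row. In a tight $SMA(m,n)$ every row has exactly $n$ filled cells, so $n$ must be even, which is already our standing hypothesis; the real content is that when $n=2$, each row consists of a single such pair $\{x,-x\}$, so the array reduces to a column vector of signed pairs whose column sums vanish. By Corollary \ref{SMA(m,2)}, such an $SMA(m,2)$ with the property exists exactly when $m\equiv 0,3\pmod 4$. This disposes of the $n=2$ case and shows that for odd $m$ with $n=2$ we need $m\equiv 3\pmod 4$; combined with the existence results below, this pins down exactly the stated congruence conditions.

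For the sufficiency direction, I would split on $n=2$ versus $n\geq 4$. The case $n=2$ is handled directly by Corollary \ref{SMA(m,2)}, giving an $SMA(m,2)$ with the property whenever $m\equiv 0,3\pmod 4$. The substantive case is $m\geq 3$ odd and $n\geq 4$ even. Here I would mimic the even–even construction of Theorem \ref{m,n even>=4}, using strong induction on $m$ with base cases supplied by the explicit small arrays. Lemma \ref{3xeven} provides an $SMA(3,n)$ with the property for every even $n$, and Lemma \ref{5xeven} provides an $SMA(5,n)$ with the property for every even $n\geq 4$; these serve as the two base cases $m=3$ and $m=5$ needed to seed an induction that increases $m$ by four at a time (so as to keep $m$ odd and to match the row-block size).

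The inductive step is where the bookkeeping lives, and it parallels the row-extension argument already used in Theorem \ref{m,n even>=4}. Assuming a shiftable $SMA(m-4,n)$ with the required property exists, I would adjoin four rows and fill the new empty $4\times n$ block with a \emph{shifted} copy of a shiftable $SMA(4,n)$ that itself has the property—such an array exists by Theorem \ref{m,n even>=4}, since $m-4$ and $4$ are both even and at least $4$ whenever $m\geq 9$. The shift increases the absolute values of the $4\times n$ block's entries by the constant $(m-4)n/2$, so the block uses precisely the values $\{(m-4)n/2+1,\ldots,mn/2\}$ up to sign, disjoint from those of the inherited $(m-4)\times n$ array, and exactly exhausts $\{1,\ldots,mn/2\}$ in total. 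Because both pieces have zero row and column sums and respect the pairing of $\pm x$ within rows, the combined $m\times n$ array is again an $SMA(m,n)$ with the required property. The one point demanding care is anchoring the induction: for $m\in\{3,5,7\}$ the $(m-4)$-block is too small, so $m=3$ and $m=5$ must be taken as genuine base cases from Lemmas \ref{3xeven} and \ref{5xeven}, while $m=7$ is obtained by extending the $SMA(3,n)$ by a shifted $SMA(4,n)$. Verifying shiftability is automatic for the even-row blocks but the $3$-row and $5$-row base arrays are not shiftable, so I expect the main obstacle to be confirming that the column-sum balancing still goes through when an unshiftable base array is extended by a shiftable block—this is exactly the delicate balancing already performed inside the proof of Lemma \ref{5xeven}, and I would reuse that mechanism rather than re-deriving it.
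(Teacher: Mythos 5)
Your overall route is the paper's route: $n=2$ is handled by Corollary \ref{SMA(m,2)}, the base cases $m=3$ and $m=5$ come from Lemmas \ref{3xeven} and \ref{5xeven}, and larger odd $m$ is reached by adjoining shifted shiftable even-row blocks with the required property supplied by Theorem \ref{m,n even>=4}. The only structural difference is that the paper does the extension in a single step, adjoining all $m-3$ rows (when $m\equiv 3\pmod 4$) or all $m-5$ rows (when $m\equiv 1\pmod 4$) filled by one shifted copy of the shiftable $SMA(m-3,n)$ or $SMA(m-5,n)$, whereas you re-run a four-rows-at-a-time induction; since Theorem \ref{m,n even>=4} is itself proved by exactly that induction, your version merely inlines it, at no gain or loss.

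Two points in your inductive step need repair, and they are connected. First, your hypothesis ``there exists a shiftable $SMA(m-4,n)$ with the required property'' is vacuous: when $m$ is odd so is $m-4$, and no array with an odd number of filled cells per column can be shiftable, as you yourself note for the $3$- and $5$-row base arrays; the hypothesis should be a plain (not necessarily shiftable) $SMA(m-4,n)$ with the property. Second, your closing worry---that extending an unshiftable base by a shiftable block requires the ``delicate balancing'' of Lemma \ref{5xeven}---is unfounded, and importing that mechanism here would be the wrong move. The balancing inside Lemma \ref{5xeven} is forced only because a $2\times n$ block with the required property can never have zero column sums (a zero sum in a two-entry column puts $\pm a$ in the same column, hence in different rows, contradicting the property), so the base $SMA(3,n)$ must first be perturbed to create compensating nonzero column sums. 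With a four-row block the problem disappears: the base $SMA(m-4,n)$ already has all row and column sums zero, and the shifted copy of the shiftable $SMA(4,n)$ again has all sums zero---shiftability of the \emph{block}, not of the base, is exactly what guarantees the shift preserves its sums---so the juxtaposition is an $SMA(m,n)$ with the property with nothing left to balance. This is precisely how the paper argues; with these two corrections your proposal coincides with the paper's proof.
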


\begin{proof}
For $n=2$ we apply Corollary \ref{SMA(m,2)}. Now let $m\geq 3$ and $n\geq 4$. We consider two cases.

\noindent {\bf Case 1: $m\equiv 3 (\mod 4)$:} \quad
By Lemma \ref{3xeven} the statement is true for $m=3$. Let $m\geq 7$ and
let $A$ be an $SMA(3,n)$  constructed in Lemma \ref{3xeven}.
We extend array $A$ by adding $m-3$ rows to create an $m \times n$ array. The empty $(m-3)\times n$ array
may be filled by a shifted copy of the $SMA(m-3,n)$ given by Theorem \ref{m,n even>=4}.

 As the shifted copies each have a row and column sum of zero, they do not change the row sums from the $3 \times n$ array, and the sums of the new columns will be zero as well. Moreover, the shifted copies have the required property. Therefore, an $SMA(m, n)$ exists with the property that the entries $\pm x$ appear in the same row for every
 $x\in\{1, 2, 3, \ldots,mn/2\}$.

\vspace{3mm}

\noindent {\bf Case 2: $m\equiv 1 (\mod 4)$:}\quad
By Lemma \ref{5xeven} the statement is true for $m=5$. Now let $m\geq 9$ and let
$A$ be an $SMA(5,n)$ constructed in Lemma \ref{5xeven}.
We extend array $A$ by adding $m-5$ rows to create an $m \times n$ array. The empty $(m-5)\times n$ array
may be filled by a shifted copy of the $SMA(m-5,n)$ given by Theorem \ref{m,n even>=4}.

 As the shifted copies each have a row and column sum of zero, they do not change the row sums from the $5 \times n$ array, and the sums of the new columns will be zero as well. Moreover, the shifted copies have have the required
 property.
 Therefore, an $SMA(m, n)$ exists with the property that the entries $\pm x$ appear in the same row for every
 $x\in\{1, 2, 3, \ldots,mn/2\}$.

\end{proof}


\begin{figure}[ht]
$$\begin{array}{cc}
\begin{array}{|c|c|c|c|}\hline
-9&	9&	7&	-7     \\  \hline
10&	-10&	-8&	8  \\ \hline
1&	-1&	2&	-2     \\ \hline
4&	5&	-4&	-5      \\ \hline
-6&	-3&	3&	6      \\ \hline
\end{array}&
\begin{array}{|c|c|c||c|c|c|}\hline
-14&	12&	2&	14&	-12&	-2 \\ \hline
-6&	-1&	7&	6&	1&	-7          \\ \hline
4&	-13&	9&	-4&	13&	-9      \\ \hline
11&	-8&	-3&	-11&	8&	3        \\ \hline
5&	10&	-15&	-5&	-10& 15     \\ \hline
\end{array}
\end{array}$$
\caption{SMA(5, 4);  $SMA(5,6)$ constructed by a $(5, 3)$ Heffter array and its opposite.}
	\label {SMA(5, 6)}
\end{figure}


\begin{figure}[ht]
	$$\begin{array}{|c|c|c|c||c|c|c|c|}\hline

2&	9&	7&	1&	-2&	-9&	-7&	-1  \\ \hline
-8&	12&	13&	4&	8&	-12&	-13&	-4   \\ \hline
-17&	3&	-16&	-6& 	17&	-3&	16&	6    \\ \hline
18&	-14&	11&	-19&	-18&	14&	-11&	19      \\ \hline
5&	-10&	-15&	20&	-5&	10&	15&	-20    \\ \hline
\end{array}$$
\caption{$SMA(5,8)$ obtained from $H(5, 4)$.}
	\label {SMA(5, 8)}
\end{figure}

\newpage

\newpage

\centerline{{\bf Appendix 1:} An example for Lemma \ref{5xeven} case 1}

\begin{figure}[ht]
$$\begin{array}{|c|c|c|c|c|c|c|c|c|c|c|c|}\hline
 1 &	-1&	2&	-2&	4	&-4&	5&	-5&	7&	-7&	8&	-8 \\ \hline
 17&	16&	-17&	14&	-16&	13&	-14&	11&	-13&	10&	-11&	-10\\ \hline	
-18&	-15&	15&	-12&	12&	-9&	9&	-6&	6&	-3&	3&	18 \\ \hline
\end{array}$$
 \caption{Array $A:$ $SMA(3, 12)$ constructed by Lemma \ref{3xeven}}
 \label{SMA 3x12}
 \end{figure}

\vspace{5mm}

 \begin{figure}[ht]
$$\begin{array}{|c|c|c|c|c|c|c|c|c|c|c|c|} \hline

2&	-2&	1&	-1&	5&	-5&	4&	-4&	8&	-8&	7&	-7  \\ \hline
  17&	16&	-17&	14&	-16&	13&	-14&	11&	-13&	10&	-11&	-10  \\ \hline
-18&	-15&	15&	-12&	12&	-9&	9&	-6&	6&	-3&	3&	18   \\ \hline
\rowcolor{lightgray}
1&	-1&	-1&	1&	1&	-1&	-1&	1&	1&	-1&	-1&	1 \\ \hline

\end{array}$$
 \caption {B: a $3\times 12$ array obtained from $SMA(3, 12)$, given above, using the construction given in Lemma \ref{5xeven};
  Row 4 displays the column sums.}
 \label{Array 3 by 12}
\end{figure}

\vspace{5mm}

\begin{figure}[ht]
$$\begin{array}{|c|c|c|c|c|c|c|c|c|c|c|c|} \hline

2&	-2&	1&	-1&	5&	-5&	4&	-4&	8&	-8&	7&	-7  \\ \hline
  17&	16&	-17&	14&	-16&	13&	-14&	11&	-13&	10&	-11&	-10  \\ \hline
-18&	-15&	15&	-12&	12&	-9&	9&	-6&	6&	-3&	3&	18   \\ \hline
19&    -19&   -21& 21& 23&-23& -25& 25&   27& -27&-29& 29 \\ \hline
-20&    20& 22&  - 22& -24&24 & 26& -26& -28& 28&30& -30 \\ \hline
\end{array}$$
 \caption {$C$: An SMA(5, 12) obtaine by the construction given in Lemma \ref{5xeven}, Case 1}
 \label{SMA(5, 12)}
\end{figure}

\newpage

\centerline{{\bf Appendix 2}: An example for Lemma \ref{5xeven} case 2}

\begin{figure}[ht]
$$\begin{array}{|c|c|c|c|c|c|c|c|c|c|}\hline
  1 & -1&	2&	 -2&	4	&-4&	5&	-5&	7&	-7 \\ \hline
  14&  13&   -14&  11&  -13&  10&    -11&  8& -10& -8 \\ \hline
 -15& -12&	12&	 -9&	9&	-6&	   6&	-3&	3& 15 \\ \hline
\end{array}$$
 \caption{Array $A:$ $SMA(3, 10)$ constructed by Lemma \ref{3xeven}}
 \label{SMA (3x10)}
 \end{figure}

\vspace{5mm}

\begin{figure}[ht]
$$\begin{array}{|c|c|c|c|c|c|c|c|c|c|}\hline
  2 & -2&	1&	 -1&	 5	&-4&	4&	    -5&	  7&	 -7 \\ \hline
  14&  13&   -14&  11&  -13&  8&   -11&     10&  -8&     -10     \\ \hline
 -15& -12&	12&	 -9&	9&	 -6&	6&	    -3&	  3&     15 \\ \hline
 \rowcolor{lightgray}
 1&-1&-1&1&1&-2&-1&2&2&-2\\ \hline
\end{array}$$

\caption {B: a 3 by 10 array obtained from $SMA(3, 10)$, given above, using the construction given in Lemma \ref{5xeven};  Row 4 displays the column sums.}
 \label{Array 3 by 10}
 \end{figure}

\vspace{5mm}

\begin{figure}[ht]
$$\begin{array}{|c|c|c|c|c|c|c|c|c|c|}\hline
  2 & -2&	1&	 -1&	 5	&-4&	4&	    -5&	  7&	 -7 \\ \hline
  14&  13&   -14&  11&  -13&  8&   -11&     10&  -8&     -10     \\ \hline
 -15& -12&	12&	 -9&	9&	 -6&	6&	    -3&	  3&     15 \\ \hline
 16&   -16& -18& 18& 20& -22&   -20&    22&        23&     -23 \\ \hline
 -17&   17& 19& -19& -21& 24&    21&    -24&        -25&    25 \\ \hline
\end{array}$$
 \caption{ $C$: An SMA(5, 10)obtained by the construction given in Lemma \ref{5xeven}, Case 2}
 \label{MA(5, 10)}
 \end{figure}


\end{document}